\newcommand{\sym}{\mathbb{S}}
\newcommand{\Q}{\mathbb{Q}}
\newcommand{\Z}{\mathbb{Z}}
\newcommand{\N}{\mathbb{N}}
\newcommand{\calL}{\mathcal{L}}
\numberwithin{equation}{section}
\newtheorem{theorem}{Theorem}
\newtheorem{lemma}{Lemma}
\newtheorem{corollary}{Corollary}
\newtheorem{proposition}{Proposition}
\theoremstyle{definition}
\theoremstyle{remark}
\definecolor{darkgreen}{rgb}{0,0.5,0}
\begin{document}

\title{Quartic points on the Fermat quintic}

\author{Alain Kraus}

\date{\today}

\begin{abstract}
In  this paper, we study  the algebraic points  of degree $4$ over $\Q$ on  the Fermat curve  $F_5/\Q$ of equation $x^5+y^5+z^5=0$. A geometrical description of these points has been given in 1997 by Klassen and Tzermias. Using their result,  as well as Bruin's work about   diophantine equations of signature $(5,5,2)$, we 
give here  an algebraic description of these points. In particular, we prove there is only one Galois extension of $\Q$ of degree $4$ that arises as  the field of definition of a non-trivial point of $F_5$.
\end{abstract}

\subjclass[2010]{Primary 11D41; Secondary 11G30}
 \keywords{Fermat quintic, number fields, rational points.}

\maketitle
\section{Introduction}
Let us denote by $F_5$ the quintic Fermat curve over $\Q$ given by the equation 
$$x^5+y^5+z^5=0.$$
Let $P$ be a point in $F_5(\overline \Q)$. 
The degree of $P$  is the degree of its field of definition over $\Q$. Write $P=(x,y,z)$ for the projective coordinates of $P$. It is said to be 
 non-trivial if $xyz\neq 0$. 
Let $\zeta$ be a primitive cubic root of unity and
$$a=(0,-1,1),\quad b=(-1,0,1),\quad c=(-1,1,0),\quad w=(\zeta,\zeta^2,1), \quad \overline{w}=(\zeta^2,\zeta,1).$$
It is well known that $F_5(\Q)=\left\lbrace a,b,c\right\rbrace$. In  1978,  Gross and Rohrlich have proved that the only quadratic points of $F_5$  
are $w$ and $\overline w$  \cite[th. 5.1]{GR}. In 1997, by proving that the group of  $\Q$-rational points of the Jacobian of $F_5$ is  isomorphic to $(\Z/5\Z)^2$, 
and by expliciting  generators, 
Klassen and Tzermias have described geometrically all the points of $F_5$ whose degrees are less than  $6$  \cite[th. 1]{KlassenTzermias}. I mention that Top and Sall have  pushed  further this description for points of $F_5$ of degrees less than $12$  \cite{TopSall}. In particular, Klassen and Tzermias have established the following statement  :

\begin{theorem}  
The points of degree $4$  of $F_5$ arise as the intersection of $F_5$ with a rational line passing through $a,b$ or $c$.
 \end {theorem}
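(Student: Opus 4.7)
The plan is to translate the statement into a question about $\Pic^4(F_5)(\Q)$, using the Mordell-Weil computation $J(F_5)(\Q) \isom (\Z/5\Z)^2$ of Klassen and Tzermias together with the classical description of the gonality of a smooth plane quintic. Let $P$ be a non-trivial point of $F_5$ of degree $4$ and let $D = \sum_\sigma \sigma(P)$ be its Galois orbit, a $\Q$-rational effective divisor of degree $4$. Writing $H$ for the class of a hyperplane section of $F_5$, the theorem will follow from showing $[D] \in \left\{[H - a], [H - b], [H - c]\right\}$ in $\Pic^4(F_5)(\Q)$. Indeed, if $D \sim H - a$, then $D + a$ is an effective $\Q$-rational divisor in the linear system $|H|$, hence cut out by a unique line $L \subset \PP^2$; the $\Q$-rationality of $D + a$ forces $L$ to be defined over $\Q$, and $L$ passes through $a$, so $P$ is one of the four non-$a$ intersection points of $L$ with $F_5$.

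To locate the three classes above, one would use that a smooth plane quintic has gonality $4$ and that every $g^1_4$ on $F_5$ arises as projection from a point of $F_5$, i.e., has the form $|H - p|$ for some $p \in F_5$. This yields an isomorphism of $\Q$-varieties $F_5 \isomto W^1_4(F_5)$, $p \mapsto [H - p]$, so that the classes in $\Pic^4(F_5)(\Q)$ with $\dim |D_0| \geq 1$ are exactly $\left\{[H - a], [H - b], [H - c]\right\}$, using $F_5(\Q) = \left\{a, b, c\right\}$. It therefore suffices to show $\dim |D| \geq 1$.

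Since $\Pic^4(F_5)(\Q)$ is a torsor under $J(F_5)(\Q) \isom (\Z/5\Z)^2$ with $25$ elements, the remaining $22$ classes all satisfy $\dim |D_0| \leq 0$, so any effective $\Q$-rational representative of such a class is unique. To finish, one would exhibit, for each of these $22$ classes, a decomposable effective $\Q$-rational representative of the form $\sum_i n_i R_i + m(w + \overline{w})$ with $R_i \in \left\{a, b, c\right\}$, $n_i, m \geq 0$, and $\sum_i n_i + 2m = 4$; such a decomposable divisor, being the unique effective representative of its class, cannot coincide with the Galois orbit of an irreducible quartic point. The central input for this enumeration is the linear equivalence $a + b + c + w + \overline{w} \sim H$ coming from the $\Q$-rational line $x + y + z = 0$, combined with the Klassen-Tzermias description of generators of $J(\Q)$. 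The main obstacle will be this finite but delicate verification that every one of the $22$ bad classes admits such a decomposable representative.
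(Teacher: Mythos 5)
First, a remark on context: the paper does not prove this statement itself --- it is quoted from Klassen--Tzermias --- so your argument has to stand entirely on its own. Your overall strategy is sound and close in spirit to the original: view $\Pic^4(F_5)(\Q)$ as a $25$-element torsor under $J(F_5)(\Q)\isom(\Z/5\Z)^2$, identify the classes carrying a pencil as $[H-a],[H-b],[H-c]$ via the classical isomorphism $F_5\isomto W^1_4$ for a smooth plane quintic, and kill the remaining classes by exhibiting their unique effective representative. The reduction in your first two paragraphs (uniqueness of the line cutting out $D+a$, rationality of that line, $\Q$-rationality of $[H-p]$ forcing $p\in F_5(\Q)$) is correct.

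The gap is in the final enumeration, and it is not merely ``delicate'': it fails as stated. Write a class as $(i,j)$ for $[4a]+i[b-a]+j[c-a]$. The tangent lines at $a,b,c$ give $5a\sim 5b\sim 5c\sim H$, and the line $x+y+z=0$ gives $a+b+c+w+\overline w\sim H$, hence $w+\overline w\sim 4a-b-c$; so the divisor $n_a a+n_b b+n_c c+m(w+\overline w)$ lies in the class $(n_b-m,\,n_c-m)\bmod 5$. Running over all $22$ admissible tuples with $n_a+n_b+n_c+2m=4$ yields only $19$ distinct classes: the $15$ with $i+j\le 4$ (from $m=0$), plus $(4,4),(1,4),(4,1)$ (from $m=1$) and $(3,3)$ (from $m=2$); the three good classes $(0,0),(4,0),(0,4)$ are each hit twice (e.g.\ $4a\sim b+c+w+\overline w$, consistent with $\dim|H-a|=1$). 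Consequently the six bad classes $(2,3),(3,2),(2,4),(4,2),(3,4),(4,3)$ admit \emph{no} decomposable representative of your form, and your argument says nothing about them. For these you must show instead that the class contains no effective divisor at all, i.e.\ $h^0=0$ --- for instance, for $(2,3)=[2b+3c-a]$ this amounts to showing that the linear system $|2b+3c|$ consists of the single divisor $2b+3c$, which does not contain $a$ --- and similarly for the other five. That is a genuinely separate (if finite) computation which the proposal omits, so as written the proof is incomplete.
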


Using this result,  and Bruin's work  about the diophantine equations $16x^5+y^5=z^2$ and $4x^5+y^5=z^2$  \cite{Bruin, Ivorra}, we propose in this paper   to give an algebraic description of the non-trivial quartic points of $F_5$.

\section{Statement of the results}

 Let $K$ be a number field of degree $4$ over $\Q$.

\begin{theorem}  
Suppose that $F_5(K)$ has a non-trivial point of degree $4$. One of the following conditions is satisfied :

1) the Galois closure of $K$ is a dihedral  extension of $\Q$ of degree $8$.

2) One has  
\begin{equation} 
K=\Q(\alpha)\quad \hbox{with}\quad 31\alpha^4-36\alpha^3+26\alpha^2-36\alpha+31=0.
\end{equation}
The extension $K/\Q$ is   cyclic. Up to Galois conjugation and permutation, $(2,2\alpha,-\alpha-1)$
is the only  non-trivial point  in $F_5(K)$.
 \end {theorem}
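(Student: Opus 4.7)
My proof plan is as follows. By Theorem 1, a non-trivial quartic point $P$ of $F_5$ lies on a $\Q$-rational line through $a$, $b$, or $c$. The symmetric group $\sym_3$ acts on $F_5$ by permuting coordinates, so we may assume the line passes through $a = (0:-1:1)$; such lines are parametrized by $t \in \Q^*$ via $L_t : y + z = t x$ (the cases $t \in \{0,\infty\}$ yield only $a$ or trivial points). Writing a point of $L_t$ as $(1 : u : t - u)$, substitution into the Fermat equation gives the residual quartic
\[
f_t(u) \;=\; 5 t u^4 - 10 t^2 u^3 + 10 t^3 u^2 - 5 t^4 u + (1 + t^5).
\]

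The $y \leftrightarrow z$ symmetry pairs the roots of $f_t$ as $(u_i, t-u_i)$ for $i = 1, 2$. The values $p_i := u_i(t-u_i)$ are the roots of a rational quadratic whose discriminant is, up to squares, $D := 5t(t^5 - 4)$, so the splitting field of $f_t$ contains $F := \Q(\sqrt D)$. When $f_t$ is irreducible over $\Q$ (the case relevant to a genuine quartic point), its Galois group is a transitive subgroup of the dihedral group $D_4 \subset \sym_4$ stabilizing this pairing, hence one of $D_4$, $\Z/4\Z$, or the Klein four $V_4$. In the first case the Galois closure of $K = \Q(P)$ is dihedral of order $8$, giving Option 1. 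Otherwise $K/\Q$ is Galois: writing $K = F(\sqrt\alpha)$ with $\alpha = t^2 - 4 p_1 \in F$, the norm $N_{F/\Q}(\alpha) = (t^5 + 16)/(5t)$ is rational, and the standard criterion yields $K$ of type $V_4$ iff $5t(t^5+16) \in \Q^{*2}$, and cyclic iff $(t^5+16)(t^5-4) \in \Q^{*2}$.

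Writing $t = a/b$ in lowest terms, each condition reduces to a Diophantine problem of signature $(5,5,2)$. The $V_4$ condition is $5a(a^5+16b^5) \in \Q^{*2}$, which after a descent yields $a^5 + 16b^5 = r z^2$ for some squarefree twist $r$; the cyclic condition is $(a^5+16b^5)(a^5-4b^5) \in \Q^{*2}$, which similarly gives the simultaneous system $a^5 + 16 b^5 = r u^2$, $a^5 - 4 b^5 = r v^2$ for a common squarefree $r$. Only finitely many twists $r$ survive local obstructions at the primes $2$, $5$, and those dividing the relevant conductors, and Bruin's and Ivorra's results on $16 x^5 + y^5 = z^2$ and $4 x^5 + y^5 = z^2$ and their twists rule out all non-trivial solutions of the $V_4$ condition, while leaving in the cyclic case only $t = -2$, up to the $\sym_3$-action on $F_5$.

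At $t = -2$ one verifies $D = 360$, so $F = \Q(\sqrt{10})$, and $(t^5+16)(t^5-4) = 576 = 24^2$, confirming the cyclic condition. The M\"obius substitution $\alpha = -(u+2)/u$ transforms $f_{-2}(u) = -10u^4 - 40u^3 - 80u^2 - 80u - 31$ (up to a rational scalar) into the palindromic polynomial $31\alpha^4 - 36\alpha^3 + 26\alpha^2 - 36\alpha + 31$; this substitution corresponds to a $\sym_3$-coordinate permutation relating $L_{-2}$ to the rational line $x + y + 2z = 0$ through $c$, on which $(2:2\alpha:-\alpha-1)$ lies. Setting $\beta = \alpha + 1/\alpha$, palindromicity gives the quadratic resolvent $31\beta^2 - 36\beta - 36 = 0$, whose discriminant $5760 = 576 \cdot 10$ shows $\Q(\beta) = \Q(\sqrt{10}) = F$ is the unique quadratic subfield of $K$, confirming the cyclic structure. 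The main obstacle is the descent in the third paragraph: enumerating the squarefree twists $r$ that survive local obstructions and applying the relevant Bruin--Ivorra theorems to each.
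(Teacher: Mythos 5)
Your setup is sound and is, after the change of parameter $t\mapsto 1/t$, the same as the paper's: your line $y+z=tx$ through $a$ is the paper's $x=t(y+1)$ (with $z=1$), your quadratic field $F=\Q(\sqrt{5t(t^5-4)})$ is the paper's $\Q(\varepsilon)$ with $\varepsilon^2=u^2-4u=5(1-4t^5)/(t^5+1)^2$, and your two conditions $5t(t^5+16)\in\Q^{*2}$ (for $V_4$) and $(t^5+16)(t^5-4)\in\Q^{*2}$ (for $C_4$) are exactly the paper's conditions $5(16t^5+1)\in\Q^{*2}$ and $(1-4t^5)(16t^5+1)\in\Q^{*2}$ in the other chart. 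Your derivation of these via the standard criterion for $K=F(\sqrt{\alpha})$ with $N_{F/\Q}(\alpha)=(t^5+16)/(5t)$ is a legitimate (arguably cleaner) alternative to the paper's route through the discriminant $\Delta=-u^2(u-4)^3(3u+4)$, and your endgame at $t=-2$ (the paper's $t=-1/2$, $u=-36/31$) correctly recovers the polynomial $31X^4-36X^3+26X^2-36X+31$ and the point $(2,2\alpha,-\alpha-1)$.

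The genuine gap is the third paragraph, which you yourself flag: the resolution of the two Diophantine conditions is the entire technical content of the paper (its Propositions 1 and 2), and the blanket appeal to ``Bruin's and Ivorra's results \ldots and their twists'' does not cover what is needed. Ivorra's results treat $x^5+2^{\beta}y^5=z^2$ and $x^5+2^{\beta}y^5=2z^2$ only; there is no cited result on twists by $5$, so the candidate equations $16a^5+b^5=5c^2$ and $b^5-4a^5=5d^2$ must be eliminated by hand. Concretely, for the $V_4$ condition the paper writes $X=a/b$ and proves via a mod $8$ argument at $2$ and a mod $25$ argument at $5$ that $v_5(b)$ is odd while every other $v_p(b)$ is even, so $b=5c^2$ and the twist by $5$ is absorbed into the square, reducing to the \emph{untwisted} equation $16a^5+b^5=d^2$ whose solutions $(a,b,d)=(-1,2,\pm4)$ do not lift to the curve $Y^2=5(16X^5+1)$. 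For the cyclic condition the paper shows the two factors are coprime away from $2$ and $5$, kills the twists by $2$ and $10$ with a $2$-adic valuation, kills the twist by $5$ by showing $b^5-4a^5\equiv 0$ and $16a^5+b^5\equiv 0 \pmod 5$ cannot both be a $5$ times a square modulo $25$, and only then invokes Ivorra for $b^5-4a^5=d^2$. Until you supply these local computations (or an equivalent enumeration and elimination of the surviving twists $r\in\{1,2,5,10\}$), the claim that only $t=-2$ survives is unsupported, and this is precisely the step where the theorem is actually proved.
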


As a direct consequence of \cite[th. 5.1]{GR} and the previous theorem, we obtain :

\begin{corollary} Suppose that $K$ does not satisfy one of the two conditions above. The set of non-trivial points of 
$F_5(K)$ is contained in $\left\lbrace w,\overline w\right\rbrace$.
\end{corollary}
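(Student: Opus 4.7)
The plan is to classify the non-trivial points $P\in F_5(K)$ by the degree $d=[\Q(P):\Q]$ of their field of definition. Since $\Q(P)\subseteq K$ and $[K:\Q]=4$, divisibility forces $d\in\{1,2,4\}$; in particular $d=3$ is excluded automatically, and it suffices to treat the three remaining possibilities.

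I would then dispose of the cases in turn. For $d=1$, the set $F_5(\Q)=\{a,b,c\}$ recalled in the introduction consists only of trivial points, so this case yields nothing. For $d=2$, the field $\Q(P)$ is a quadratic subfield of $K$, so $P$ is a non-trivial quadratic point of $F_5$, and the Gross--Rohrlich theorem \cite[th. 5.1]{GR} forces $P\in\{w,\overline w\}$; note that this case can only actually occur when $K\supseteq\Q(\zeta)$, but either way the conclusion $P\in\{w,\overline w\}$ holds. For $d=4$, the field of definition of $P$ equals $K$ itself, so $K$ is a degree-$4$ field of definition of a non-trivial quartic point of $F_5$; Theorem 2 then forces $K$ to satisfy condition 1) or condition 2), contradicting the hypothesis of the corollary. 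Hence this case produces no contribution, and assembling the three cases yields the inclusion claimed.

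There is no real obstacle here: the corollary is essentially a bookkeeping consequence of the divisibility relation $d\mid 4$, the Gross--Rohrlich classification of quadratic points, and Theorem 2. The only mild care required is distinguishing the degree of the point $P$ from the degree $[K:\Q]$, since a point of $F_5(K)$ need not be defined over all of $K$; one has to apply the appropriate classification statement to each possible value of $d$ separately.
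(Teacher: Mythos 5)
Your proof is correct and is exactly the argument the paper intends when it calls the corollary ``a direct consequence'' of Gross--Rohrlich and Theorem 2: split by the degree $d=[\Q(P):\Q]\in\{1,2,4\}$, use $F_5(\Q)=\{a,b,c\}$ for $d=1$, Gross--Rohrlich for $d=2$, and Theorem 2 (contrapositive) for $d=4$. No differences worth noting.
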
 

All that follows is devoted to the proof of theorem 2.

\section{Preliminary results}
Let $P=(x,y,z)\in F_5(K)$ be a non-trivial point of degree $4$. By permuting $x,y,z$ if necessary, we can  suppose that $P$ belongs to a $\Q$-rational line $\calL$ passing through 
$a=(0,-1,1)$ (th. 1).  Moreover, $P$ being non-trivial we shall assume 
\begin{equation} 
z=1.
\end{equation}

\begin{lemma} 
One has $K=\Q(y)$. 
There exists $t\in \Q$, $t\neq -1$, such that
\begin{equation} 
y^4+uy^3+(u+2)y^2+uy+1=0\quad \hbox{with}\quad u=\frac{4t^5-1}{t^5+1},    
\end{equation} 
\begin{equation} 
 x=t(y+1).
\end{equation} 

\end{lemma}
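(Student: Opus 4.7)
The strategy is to parameterize the $\Q$-rational lines through $a = (0,-1,1)$, substitute into the Fermat equation to extract a quartic in $y$, and then use a degree count to conclude that $y$ generates $K$.

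First, any $\Q$-rational line through $a$ is defined by an equation $\alpha X + \beta(Y+Z) = 0$, so the pencil of such lines is parameterized by $t \in \PP^1(\Q)$ via $X = t(Y+Z)$, together with the exceptional member $Y + Z = 0$. The exceptional line meets $F_5$ only at $a$ itself, so the non-trivial intersection points lie on an affine line $x = t(y+z)$ with $t \in \Q$; in the chart $z = 1$ provided by (2.2), this becomes (2.4).

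Second, I would substitute $x = t(y+1)$ into $x^5 + y^5 + 1 = 0$ and use $y^5 + 1 = (y+1)(y^4 - y^3 + y^2 - y + 1)$ to factor out $y+1$, which is nonzero because $P$ is non-trivial. Expanding $(y+1)^4$ and collecting by powers of $y$ then produces the palindromic relation
\[
(t^5+1)y^4 + (4t^5-1)y^3 + (6t^5+1)y^2 + (4t^5-1)y + (t^5+1) = 0.
\]
Dividing through by $t^5+1$ requires $t \neq -1$, which I verify separately: at $t = -1$ the equation degenerates to $-5y(y+1)(y^2+y+1) = 0$, whose only non-trivial roots are primitive cube roots of unity, giving merely the quadratic points $w, \overline{w}$ of $F_5$ in view of \cite[th.~5.1]{GR}; this contradicts $\deg(P) = 4$. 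The identity $\frac{4t^5-1}{t^5+1} + 2 = \frac{6t^5+1}{t^5+1}$ then recasts the relation as (2.3).

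Finally, the equality $K = \Q(y)$ follows from a short degree count. Since $\Q(y) \subseteq K$, one has $[\Q(y):\Q] \in \{1,2,4\}$; if this degree were at most $2$, then $x = t(y+1) \in \Q(y)$ would also have degree at most $2$ over $\Q$, forcing $[K:\Q] = [\Q(x,y):\Q] \leq 2$ and contradicting $[K:\Q] = 4$. The whole argument is essentially an explicit substitution; the only step needing real attention is the exclusion $t \neq -1$, which rests on combining non-triviality with the degree hypothesis.
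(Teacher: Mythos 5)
Your proposal is correct and follows essentially the same route as the paper: parameterize the $\Q$-rational lines through $a$ by $x=t(y+1)$ after excluding the tangent line $Y+Z=0$, rule out $t=-1$ because it would force $P$ to be one of the quadratic points $w,\overline{w}$, and divide the substituted Fermat relation by $y+1$ to obtain the palindromic quartic. The only cosmetic differences are that you run the $t=-1$ exclusion in the variable $y$ where the paper argues with $x$, and you phrase $K=\Q(y)$ as a degree count rather than the immediate observation $K=\Q(x,y)=\Q(y)$.
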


\begin{proof} 
The equation of the tangent line    to $F_5$ at the point $a$ is $Y+Z=0$. Since   $x\neq 0$,   it is distinct from $\calL$.
According to (3.1), it follows  there exists $t\in \Q$ such that  
$$x=t(y+1).$$
In particular, one has $K=\Q(y)$. 
Furthermore, one has 
\begin{equation} 
t\neq -1.
\end{equation} 
 Indeed, if $t=-1$, the equalities
$x+y+1=0$ and $x^5+y^5+1=0$
imply
$$x(x+1)(x^2+x+1)=0.$$
Since $P$ is non-trivial, one has $x(x+1)\neq 0$, so $x^2+x+1=0$. This leads to $P=w$ or $P=\overline w$, which contradicts the fact that  $P$ is not a quadratic point, and proves (3.4).  

From  the equalities (3.3) and $x^5+y^5+1=0$, as well as  the condition $y\neq -1$,  we then deduce the lemma.
\end{proof} 

Let  $G$  be the Galois group of the Galois closure of $K$ over $\Q$. Let us denote by $|G|$ the order of $G$. 

\begin{lemma} 
1) One has $|G|\in \left\lbrace 4,8\right\rbrace$.

2) Suppose that $|G|=4$.  One of the  two following conditions is satisfied  :
\begin{itemize}
 \item[(1)]   $5(16t^5+1)$ is a square in $\Q$.
  \item[(2)]  $(1-4t^5)(16t^5+1)$ is a square in $\Q$.
 \end{itemize}

\end{lemma}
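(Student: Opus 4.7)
The plan is to exploit the fact that the quartic satisfied by $y$ is palindromic. Dividing the relation (3.2) by $y^2$ and setting $Y = y + 1/y$ reduces it to the rational quadratic $Y^2 + uY + u = 0$, whose two roots I denote $Y_1, Y_2$, with $Y_1 = y + 1/y \in K$. Since $[K:\Q]=4$ and $Y_1$ satisfies a rational quadratic, either $Y_1 \in \Q$ or $[\Q(Y_1):\Q] = 2$; the first case is impossible, because then $y$ would satisfy the rational quadratic $y^2 - Y_1 y + 1 = 0$, forcing $[K:\Q]\le 2$. Hence $\Q(Y_1)$ is a quadratic subfield of $K$ and $K = \Q(Y_1)(\sqrt{Y_1^2 - 4})$.

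The Galois closure $L/\Q$ is the splitting field of the quartic, namely $\Q(Y_1, Y_2, \sqrt{Y_1^2 - 4}, \sqrt{Y_2^2 - 4})$. Since $Y_2 = -u - Y_1 \in \Q(Y_1) \subset K$, one has $L = K(\sqrt{Y_2^2 - 4})$, which is of degree $1$ or $2$ over $K$. This gives $|G| \in \{4, 8\}$ and settles part 1).

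For part 2), assume $|G| = 4$, i.e.\ $\sqrt{Y_2^2 - 4} \in K$, equivalently $\sqrt{(Y_1^2 - 4)(Y_2^2 - 4)} \in K$. Two short substitutions, based on $Y_1 + Y_2 = -u$, $Y_1 Y_2 = u$, and $u = (4t^5 - 1)/(t^5 + 1)$, give
\begin{equation*}
u^2 - 4u = \frac{5(1 - 4t^5)}{(t^5 + 1)^2}, \qquad (Y_1^2 - 4)(Y_2^2 - 4) = -3u^2 + 8u + 16 = \frac{5(16t^5 + 1)}{(t^5 + 1)^2}.
\end{equation*}
The first identifies the quadratic subfield as $\Q(Y_1) = \Q(\sqrt{5(1-4t^5)})$; the second translates the condition $|G|=4$ into the requirement that $5(16t^5+1)$ be a square in $\Q(Y_1)$.

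Finally, I would invoke the elementary fact that a rational number $q$ is a square in $\Q(\sqrt{d})$ if and only if $q \in (\Q^*)^2 \cup d\cdot(\Q^*)^2$. Applied with $d = 5(1-4t^5)$ and $q = 5(16t^5+1)$, the two alternatives yield exactly the conditions in the lemma: either $5(16t^5+1)$ is already a rational square (condition (1)), or $d q = 25(1-4t^5)(16t^5+1)$ is a rational square, i.e.\ $(1-4t^5)(16t^5+1)$ is a rational square (condition (2)). The only real obstacle is the bookkeeping of the rational factor $(t^5+1)^{-2}$ appearing in both identities; once it is absorbed into the squareness conditions, everything else is mechanical.
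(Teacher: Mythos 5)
Your argument is correct and follows essentially the same route as the paper: both identify the quadratic subfield as $\Q(\sqrt{u^2-4u})=\Q(\sqrt{5(1-4t^5)})$ via the resolvent $Y^2+uY+u$ of the palindromic quartic, show that when $K/\Q$ is Galois the quantity $(Y_1^2-4)(Y_2^2-4)=-(u-4)(3u+4)=5(16t^5+1)/(t^5+1)^2$ must be a square in that subfield, and then split into the two rational alternatives. The paper packages this last step through the discriminant $\Delta$ of the quartic, which coincides with your quantity modulo rational squares, so the difference is purely presentational.
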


\begin{proof} Let us denote in $\Q[X]$  
$$f=X^4+uX^3+(u+2)X^2+uX+1.$$
One has $f(y)=0$ (lemma 1). 
Let $\varepsilon\in \overline \Q$  such that
$$\varepsilon^2=u^2-4u.$$
The element  $y+\frac1y$ is a root of the polynomial $X^2+uX+u$. So we have the inclusion
\begin{equation}
\Q(\varepsilon) \subseteq K.
\end{equation}
Moreover, we  have 
 the equality
\begin{equation}
f=\left(X^2+\frac{u-\varepsilon}{2}X+1\right)\left(X^2+\frac{u+\varepsilon}{2}X+1\right).
\end{equation}
Since $K=\Q(y)$ and $[K:\Q]=4$,  
 we  have
\begin{equation}
[\Q(\varepsilon):\Q]=2.
\end{equation}
From (3.6), we deduce that 
the  roots of 
$f$ belong to at most two  quadratic extensions of $\Q(\varepsilon)$.  The equality (3.7) then implies  $|G|\leq 8$.  Since $4$ divides $|G|$, this proves the first assertion.

Henceforth let us suppose  $|G|=4$ i.e. the extension $K/\Q$ is Galois.
Let $\Delta$ be the  discriminant  of $f$. One has the equalities
\begin{equation}
\Delta=-u^2(u-4)^3(3u+4)=5^3\frac{(4t^5-1)^2(16t^5+1)}{(t^5+1)^6}.
\end{equation}
Let us prove that 
\begin{equation}
\Delta\ \hbox{is a square in }\ \Q(\varepsilon).
\end{equation}

From (3.6) and our assumption,  the roots of the polynomials 
$$X^2+\frac{u-\varepsilon}{2}X+1\quad \hbox{and}\quad  X^2+\frac{u+\varepsilon}{2}X+1$$ belong to $K$, which is a quadratic extension of $\Q(\varepsilon)$ ((3.5) and (3.7)). Therefore, the  product of their discriminants 
$$\left(\left(\frac{u-\varepsilon}{2}\right)^2-4\right)\left( \left(\frac{u+\varepsilon}{2}\right)^2-4\right)\quad \hbox{i.e.}\quad -(u-4)(3u+4)$$
must be a square in $\Q(\varepsilon)$. The first equality of (3.8) then implies  (3.9).

Suppose that the condition (1) is not satisfied. From second equality of (3.8),   we deduce that $\Delta$ in not a square in $\Q$.  
It follows from (3.9) that we have 
$$\Q\left(\sqrt{\Delta}\right)=\Q(\varepsilon).$$
Therefore, $\Delta(u^2-4u)$ is a square in $\Q$, in other words,  such is the case for $-u(3u+4)$. One has the equality
$$-u(3u+4)=\frac{(1-4t^5)(16t^5+1)}{(t^5+1)^2}.$$
This implies the condition (2) and proves the lemma.
 \end{proof} 

\section{The curve $C_1/\Q$}
Let us denote by $C_1/\Q$ the curve, of genus $2$, given by the  equation
$$Y^2=5(16X^5+1).$$

\begin{proposition} 
The set  $C_1(\Q)$ is empty.
\end{proposition}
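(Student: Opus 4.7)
The plan is to convert a hypothetical rational point on $C_1$ into a primitive integer solution of one of the signature $(5,5,2)$ equations studied in \cite{Bruin,Ivorra}, and then to inspect the (very short) list of primitive solutions.

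I would write a candidate point as $(X,Y) = (A/B, C/D)$ in lowest terms with $B, D > 0$, so that the defining equation becomes
\begin{equation*}
C^{2} B^{5} \;=\; 5 D^{2}\bigl(16 A^{5}+B^{5}\bigr).
\end{equation*}
A $p$-adic valuation analysis, using $\gcd(A,B) = \gcd(C,D) = 1$, forces $v_p(B)$ to be even for every prime $p \neq 5$ and $v_5(B)$ to be either zero or odd. Hence $B$ takes one of the two shapes
\begin{equation*}
B = b^{2} \ \text{with}\ \gcd(b,5)=1, \qquad \text{or} \qquad B = 5 b^{2}.
\end{equation*}
In the first case one also finds $5 \mid C$, and setting $C_1 = C/5$ produces a coprime triple $(A, b^{2}, C_1)$ satisfying $16 x^{5}+y^{5}=5 z^{2}$ with $y$ a perfect square. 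In the second case, a direct substitution produces a coprime triple $(A, B, C)$ satisfying $16 x^{5}+y^{5}=z^{2}$ with $5 \mid y$.

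Both equations are among those completely solved in \cite{Bruin,Ivorra}. A direct inspection of the respective lists of primitive integer solutions shows that none of them satisfies the extra shape condition extracted above, apart from the borderline possibilities $xy = 0$: the case $y = 0$ is excluded by $B > 0$, while $x = 0$ would force $Y^{2} = 5$, which has no rational solution. This gives the required contradiction, whence $C_1(\Q)$ is empty. The main obstacle is the bookkeeping of the $p$-adic descent at $p = 2$ and $p = 5$, and the careful matching of the shape conditions obtained from $C_1$ against the known lists of primitive solutions.
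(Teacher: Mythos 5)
Your overall strategy (clear denominators, do a $p$-adic valuation analysis on the denominator of $X$, reduce to a signature $(5,5,2)$ equation, and inspect the known list of primitive solutions) is the same as the paper's, and your second case ($5\mid B$, leading to a primitive solution of $16x^{5}+y^{5}=z^{2}$ with $5\mid y$, which is incompatible with the list $(x,y,z)=(-1,2,\pm 4)$ from the appendix of \cite{Ivorra}) is essentially the argument given there. The problem is your first case. When $v_{5}(B)=0$ you reduce to the equation $16x^{5}+y^{5}=5z^{2}$, and you then assert that this equation is ``among those completely solved in \cite{Bruin,Ivorra}.'' It is not: \cite{Ivorra} treats $x^{p}+2^{\beta}y^{p}=z^{2}$ and $x^{p}+2^{\beta}y^{p}=2z^{2}$ (coefficient $1$ or $2$ in front of $z^{2}$), and the paper only ever invokes the lists for $16x^{5}+y^{5}=z^{2}$ and $4x^{5}+y^{5}=z^{2}$. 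The twist by $5$ is a genuinely different curve, and it cannot be folded back into the untwisted equation by rescaling, because $(5x,5y,5^{3}z)$ is not a square multiple of a primitive solution. So ``a direct inspection of the respective lists'' is not available here, and your case 1 is left open. This matters: the whole point of the proposition is that one of Lemma 2's two branches must be killed outright, so an unproved auxiliary $(5,5,2)$ equation is a real gap, not bookkeeping.

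The gap is repairable, and the paper's own proof shows how: it rules out $v_{5}(X)=0$ \emph{before} any reduction, by a congruence modulo $25$. If $v_{5}(X)=0$ then $X^{5}\equiv \pm 1,\pm 7 \pmod{25}$; the relation $5Z^{2}=16X^{5}+1$ forces $X^{5}\equiv -1\pmod{25}$ and hence $Z^{2}\equiv 2\pmod 5$, which is impossible. Consequently $v_{5}(B)$ is odd (never zero), your first case is vacuous, and only the reduction to $16x^{5}+y^{5}=z^{2}$ survives. Alternatively, you could salvage your own route by exploiting the shape condition you already recorded: in your case 1 one has $y=b^{2}$ with $5\nmid b$, so $y^{5}=b^{10}\equiv \pm 1\pmod{25}$, while $16x^{5}\equiv 16\varepsilon\pmod{25}$ with $\varepsilon\in\{\pm1,\pm7\}$; a short check shows $16\varepsilon\pm 1$ never lies in $\{0,5,20\}\bmod 25$, so $16x^{5}+y^{5}=5z^{2}$ has no such solution. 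Either fix closes the argument, but as written the proof is incomplete.
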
 

\begin{proof}
Suppose there exists a point $(X,Y)\in C_1(\Q)$. Let $Z=\frac Y5$. We obtain
\begin{equation}
5Z^2=16X^5+1.
 \end{equation}
 
Let $a$ and $b$ be coprime integers, with $b\in \N$, such that
$$X=\frac{a}{b}.$$
Let us prove there exists $c\in \N$ such that
\begin{equation}
b=5c^2.
\end{equation}
For every  prime number $p$, let $v_p$ be the $p$-adic valuation over $\Q$. 

If $p$ is a prime number dividing $b$, distinct from $2,5$, one has
$2v_p(Z)=-5v_p(b)$, consequently
\begin{equation}
v_p(b)\equiv 0 \mod 2.
 \end{equation}
Moreover, one has $v_2(X)<0$ ($5$ is not a square  modulo $8$), so $4-5v_2(b)=2v_2(Z)$. In particular,  one has
\begin{equation}
v_2(b)\equiv 0 \mod 2.
 \end{equation}
Let us verify the congruence
\begin{equation}
v_5(b)\equiv 1 \mod 2.
 \end{equation}
One has $v_5(X)\leq 0$. Suppose $v_5(X)=0$. In this case, one has $X^5\equiv \pm 1, \pm 7 \mod 25$. The equality (4.1)  implies $X^5\equiv -1 \mod 25$ and 
$Z^2\equiv 2 \mod 5$, which leads to a contradiction. Therefore, we have $1+2v_5(Z)=-5v_5(b)$, which proves   (4.5).

The conditions (4.3), (4.4) and (4.5) then  imply (4.2).

We deduce from (4.1) and (4.2) the equality
$$16a^5+b^5=d^2\quad \hbox{with}\quad d=5^3c^5Z.$$
One has $ab\neq 0$. From the  informations given in  the Appendix of \cite{Ivorra}, this implies 
$$(a,b,d)=(-1,2,\pm 4).$$
We obtain $X=-1/2$, which is not the  abscissa of a point of $C_1(\Q)$, hence the result.
\end{proof}

\section{The curve $C_2/\Q$}
Let us denote by $C_2/\Q$ the curve, of genus $4$, given by the  equation
$$Y^2=(1-4X^5)(16X^5+1).$$

\begin{proposition} 
One has
$$C_2(\Q)=\left\lbrace (0,\pm 1), (-1/2,\pm 3/4)  \right\rbrace.$$
\end{proposition}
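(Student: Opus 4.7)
The proof will mirror the strategy used for Proposition 1, reducing $C_2(\Q)$ to diophantine equations of signature $(5,5,2)$ already solved by Bruin. Let $(X,Y) \in C_2(\Q)$, write $X = a/b$ with $\gcd(a,b) = 1$ and $b \geq 1$, and clear denominators: setting $W := b^5 Y$, the defining equation becomes
\begin{equation*}
W^2 = u \cdot v, \qquad u := b^5 - 4 a^5, \qquad v := b^5 + 16 a^5,
\end{equation*}
and since $W \in \Q$ with $W^2 \in \Z$, one has $W \in \Z$.

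First I would compute $\gcd(u,v)$. The identities $v - u = 20 a^5$ and $v + 4 u = 5 b^5$, together with $\gcd(a,b) = 1$, force $\gcd(u,v) \mid 5 \gcd(4, b^5)$. A parity and mod-$5$ analysis then pins down the relevant valuations: if $b$ is odd one has $\gcd(u,v) \mid 5$, while if $b$ is even (so $a$ is odd) a direct computation yields $v_2(u) = 2$ and $v_2(v) = 4$, hence $v_2(W) = 3$. Moreover, $5 \mid u$ if and only if $5 \mid v$ if and only if $a + b \equiv 0 \pmod{5}$, and in that case a lift modulo $25$ gives $v_5(u) = v_5(v) = 1$.

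Since $uv = W^2 \geq 0$, the integers $u$ and $v$ share a sign, and, after extracting the controlled common factor, each must equal an explicit small integer times a perfect square. Every resulting identity is, up to signs and a small scalar, an instance of
\begin{equation*}
4 x^5 + y^5 = z^2 \qquad \text{or} \qquad 16 x^5 + y^5 = z^2,
\end{equation*}
whose complete lists of primitive integer solutions appear in Bruin's work and are recalled in the Appendix of \cite{Ivorra}. Matching each case against the conditions $\gcd(a,b) = 1$ and $b \geq 1$ leaves only $(a,b) = (0,1)$, giving $(X,Y) = (0, \pm 1)$, and $(a,b) = (-1,2)$, giving $(X,Y) = (-1/2, \pm 3/4)$.

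The main obstacle is the bookkeeping: for each combination of sign and common factor one must either rule out the resulting equation by a local (typically $2$-adic or $5$-adic) congruence, or identify it with one of Bruin's equations and read off the unique non-trivial solution. The case $b$ even is precisely the one that produces $(-1/2, \pm 3/4)$; its analysis pins down $(a, b) = (-1, 2)$ through the same Bruin solution $(-1, 2, \pm 4)$ of $16 x^5 + y^5 = z^2$ already invoked in the proof of Proposition 1.
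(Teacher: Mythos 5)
Your proposal is correct and follows essentially the same route as the paper: clear denominators, observe that $b^5-4a^5$ and $16a^5+b^5$ are coprime away from $2$ and $5$, eliminate the factor-of-$2$ and factor-of-$5$ cases by $2$-adic and mod-$25$ congruences, and reduce the remaining case to Bruin's equations $4x^5+y^5=z^2$ and $16x^5+y^5=z^2$ via the Appendix of \cite{Ivorra}. The "bookkeeping" you defer is exactly what the paper carries out (in particular the simultaneous insolubility of $b^5-4a^5=5d^2$ and $16a^5+b^5=5c^2$ modulo $25$), so no ideas are missing.
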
 

\begin{proof}
Let $(X,Y)$ be a point of $C_2(\Q)$. Let $a$ and $b$ be coprime integers  such that
$$X=\frac{a}{b}.$$
We obtain the equality
\begin{equation}
(Yb^5)^2=(b^5-4a^5)(16a^5+b^5).
 \end{equation}
Therefore, $(b^5-4a^5)(16a^5+b^5)$ is the square of an integer. Moreover, $b^5-4a^5$ and $16a^5+b^5$ are coprime apart from  $2$ and $5$. 
So, changing $(a,b)$ by $(-a,-b)$ if necessary, there exists $d\in \N$ such that
$$b^5-4a^5\in \left\lbrace d^2, 2d^2, 5d^2, 10d^2\right\rbrace.$$

Suppose $b^5-4a^5\in \left\lbrace 2d^2, 10d^2\right\rbrace.$ In this case, $b$ must be even, therefore $v_2(2d^2)=2$, which  is not.

Suppose   $b^5-4a^5=d^2$. One has $b\neq 0$.  It then comes from \cite{Ivorra} that
$$a=0\quad \hbox{or}\quad (a,b,d)=(-1,2,\pm 6).$$
We obtain $X=0$ or $X=-1/2$, which leads to the announced points in the statement.

Suppose  $b^5-4a^5=5d^2$. It follows from (5.1) that there exists $c\in \N$ such that $16a^5+b^5=5c^2$. Since $a$ and $b$ are coprime, $5$ does not divide $ab$.  We then directly verify that the two equalities $b^5-4a^5=5d^2$ and $16a^5+b^5=5c^2$ do not have simultaneously any solutions modulo $25$, hence the result.
\end{proof}

\section{End of the proof of Theorem 2}
The group $G$ is isomorphic to a subgroup of   the symmetric group $\sym_4$ and one has $|G|=4$ or $|G|=8$ (lemma 2).
In case $|G|=8$,  $G$ is isomorphic to a $2$-Sylow subgroup of $\sym_4$, that is dihedral. 

Suppose $|G|=4$ and let us prove the assertion 2 of the theorem. 

First, we directly verify that the extension $K/\Q$ defined by the condition (2.1) is cyclic of degree $4$, and that the point $(2,2\alpha,-\alpha-1)$ belongs to $F_5(K)$. 

Conversely,  from the proposition 1, the condition (1) of the lemma 2 is not satisfied. The condition (2) and the proposition 2 imply that 
$t=0$ or $t=-1/2$. The case $t=0$ is excluded because $P$ is non-trivial. With  the condition (3.2),  we obtain 
$$u=-\frac{36}{31}.$$
Thus, necessarily $y$ is a root of the polynomial
$31X^4-36X^3+26X^2-36X+31$, in other words $y$ is a conjugate over $\Q$ of $\alpha$. The equality  (3.3),
$$x=-\frac{y+1}{2}$$
then implies  the result.

\bigskip
{\bf{Acknowledgments.}} I thank D. Bernardi for his remarks during the writing of this paper.

\bigskip\bigskip
Alain Kraus : alain.kraus@imj-prg.fr

Universit\'e de Paris VI, Institut de Math\'ematiques de Jussieu, 4 place Jussieu, 75005 Paris, France.

\end{document}